\documentclass{article}
\addtolength{\textwidth}{4cm}
\addtolength{\hoffset}{-2cm}
\addtolength{\textheight}{1cm}
\addtolength{\voffset}{-0,5cm}

\usepackage{amssymb,amsmath}
\usepackage{amsthm}
\usepackage{ifthen,amsfonts,graphicx,latexsym,algorithm,algorithmic,url,txfonts,bbm}
\usepackage{color}
\usepackage{verbatim}
\usepackage[latin1]{inputenc}
\usepackage[english]{babel}

\newtheorem{theorem}{Theorem}[section]
\newtheorem{lemma}[theorem]{Lemma}

\theoremstyle{remark}

\setcounter{equation}{0}

\newcommand{\ve}{\varepsilon}





\newcommand{\pical}{\mathcal{P}}

\newcommand{\R}{\mathbb{R}}

\newcommand{\Lip}{\mathrm{Lip} }

\newcommand{\spt}{\mathrm{spt}}

\graphicspath{{./MFGRESULTS/}}

\def\og{\leavevmode\raise.3ex\hbox{$\scriptscriptstyle\langle\!\langle$~}}
\def\fg{\leavevmode\raise.3ex\hbox{~$\!\scriptscriptstyle\,\rangle\!\rangle$}}

\title{Lipschitz estimates on the JKO scheme for the Fokker-Plack equation on bounded convex domains}
\author{Vincent Ferrari\thanks{\'Ecole Normale Sup\'erieure; 45 Rue d'Ulm, 75005 Paris, FRANCE, email: \textsf{vferrari@clipper.ens.fr}}$\,$ and Filippo Santambrogio\thanks{Institut Camille Jordan, Universit\'e Claude Bernard - Lyon 1; 43 boulevard du 11 novembre 1918,
69622 Villeurbanne cedex,
FRANCE, email: \textsf{santambrogio@math.univ-lyon1.fr} } \thanks{Institut Universitaire de France (IUF)}}

\begin{document}
\maketitle

\begin{abstract} Given a semi-convex potential $V$ on a convex and bounded domain $\Omega$, we consider the Jordan-Kinderlehrer-Otto scheme for the Fokker-Planck equation with potential $V$, which defines, for fixed time step $\tau>0$, a sequence of densities $\rho_k\in\pical(\Omega)$. Supposing that $V$ is $\alpha$-convex, i.e. $D^2V\geq \alpha I$, we prove that the Lipschitz constant of $\log\rho+V$ satisfies the following inequality: $\Lip(\log(\rho_{k+1})+V)(1+\alpha \tau)\leq \Lip(\log(\rho_{k})+V)$. This provides exponential decay if $\alpha>0$, Lipschitz bounds on bounded intervals of time, which is coherent with the results on the continuous-time equation, and extends a previous analysis by Lee in the periodic case.
\end{abstract}
\section{Introduction}

This short paper is concerned with the parabolic PDE 
\begin{equation}\label{FP}
\partial_t\rho-\Delta\rho-\nabla\cdot(\rho \nabla V)=0,
\end{equation}
which is known as Fokker-Planck equation. It includes a linear diffusion effect (corresponding to the Laplacian in the above equation) and advection by a drift, which is supposed to be  of gradient type. The function $V$ is supposed here to be given, independent of time and of $\rho$, and sufficiently smooth. We consider this equation on a bounded domain $\Omega$, and we complete it with no-flux boundary conditions
$$(\nabla\rho+\rho\nabla V)\cdot \mathbf{n}=0.$$ 

Starting from the seminal paper \cite{JKO}, it is well-known that this equation is the gradient flow of the functional
$$\rho\mapsto\mathcal E(\rho):=\int \rho\log\rho + \int Vd\rho$$
with respect to the Wasserstein distance $W_2$. This distance is the one defined from the minimal cost in the optimal transport  with quadratic cost. The reader can refer to \cite{villani,OTAM} for more details about optimal transport and Wasserstein distances, and \cite{AmbGigSav,surveyGradFlows} for gradient flows in this setting.

The same paper \cite{JKO} also provides a time-discretized approach to the above equation, now known as the Jordan-Kinderlehrer-Otto scheme (JKO). It consists in fixing a time step $\tau>0$ and then iteratively solving
$$\min\left\{  \frac{W_{2}^{2}(\rho,\rho_k)}{2\tau} +\mathcal E(\rho)\,:\,\rho\in\pical(\Omega)\right\},$$
defining $\rho_{k+1}$ as the unique minimizer of the above optimization problem. The sequence obtained in this way represents a good approximation of the continuous-time solution, in the sense that $\rho_k$ is close to $\rho(k\tau,\cdot)$.

For this equation, which is linear and quite simple, using the JKO scheme as a tool to approximate the solution (for numerical or theoretical purposes) is not necessary, but the same scheme can be useful for equations which are gradient flow of more involved functionals. In order to prove convergence, it is often necessary to prove compactness estimates on the solutions of the scheme. Among the bounds that one can prove, the most striking ones are those where a certain norm decreases when passing from $\rho_k$ to $\rho_{k+1}$. The case where a same norm increases when passing from $\rho_k$ to $\rho_{k+1}$, but no more than a quantity of the order of $\tau$, so that the bound can be iterated, is also interesting. We cite for instance $L^\infty$ bounds (see for instance Section 7.4.1 in \cite{OTAM}), BV bounds (see \cite{demesave}), which are valid when $V=0$ and the entropy $\int \rho\log\rho$ is replaced by an arbitrary convex penalization $\int f(\rho)$. 

We are interested in this paper in Lipschitz bounds and more precisely in the quantity $\Lip(\log\rho+V)$. Using for instance the well-known Bakry-Emery theory (see \cite{BakGenLed}) this quantity decreases in time along solutions of the continuous equation (which is a re-writing of the transport-diffusion equation $\partial_t u = \Delta u -\nabla V\cdot \nabla u$ if one sets $u=\rho e^V$) if $V$ is convex. If $D^2V\geq \alpha I$ one can see that $e^{\alpha t}\Lip(\log\rho_t+V)$ decreases in time, which implies exponential convergence as $t\to \infty$ if $\alpha>0$ or controlled growth if $\alpha<0$.

It is highly remarkable that these bounds can exactly be translated into the JKO setting, i.e. they also holds along the discrete-time iterations of the scheme. This was already observed in \cite{LeeJKO}, and the analysis is very similar to ours, but was only restricted to the periodic setting. The main object of the present paper is to extend the analysis of \cite{LeeJKO} to a case with boundary. This recalls what is done in \cite{DiMSan}, where a similar estimate is performed on the quantity 
$$\int \left|\frac{\nabla\rho}{\rho}+\nabla V\right|^pd\rho,$$
which is proven to decrease along iterations if $V$ is convex and the domain is also convex (this was obtained thanks to the so-called {\it five gradients inequality} introduced in \cite{demesave}). Taking the power $1/p$ and sending $p\to\infty$ also provides an estimate on $||\frac{\nabla\rho}{\rho}+\nabla V||_{L^\infty}$. The same computation unfortunately does not work if we only have $D^2V\geq \alpha I$ with $\alpha<0$, since a similar estimate is only possible as soon as $1+\alpha\tau p>0$, which prevents to send $p\to \infty$
without sending $\tau\to 0$. In this sense, it is not an estimate on the JKO scheme.
Moreover, obtaining an $L^\infty$ bound (i.e. an estimate on a maximal value) out of a limit of integral quantities and using the five gradients inequality is a complicated procedure which can be highly simplified, which is the scope of this note.

As in \cite{LeeJKO}, we will obtain the desired estimate by taking the maximal point of the squared norm of a gradient and using the optimality conditions together with the Monge-Amp\`ere equation on the optimal transport map from $\rho_{k+1}$ to $\rho_k$. Even if we believe that the presentation that we give here is lighter and if the regularity assumptions on the date have been reduced to the minimal ones, the computations that we present are exactly the same as those of  \cite{LeeJKO} in the case of a torus. The added value of our work is the attention to the boundary. The key point are
\begin{itemize}
\item a lemma which guarantees that $\max |\nabla\varphi|$ is not attained on the boundary when $\varphi$ is the Kantorovich potential between two smooth densities on a ball;
\item an extension-approximation procedure to pass from an arbitrary convex domain to a larger ball, and to regularize the densities.
\end{itemize}

\section{Main result}
Let $\Omega$ be a bounded closed convex domain in $\mathbb{R}^{d}$, V a non-negative Lipschitz function on $\Omega$, $\tau >0$ and $g$ a probability density on $\Omega$ (i.e. $g\in L^1(\Omega), g\geq 0$ and $\int _\Omega g=1$). We study the minimizer of the functional $\mathcal{F}^\tau_{g,V}$ on $\mathcal{P}(\Omega)$ (the set of probabilities measures on $\Omega$) given by :

\begin{equation}
	\mathcal{F}^\tau_{g,V}(\rho) = \frac{W_{2}^{2}(\rho,g)}{2\tau} +\mathcal E(\rho)
= \frac{W_{2}^{2}(\rho,g)}{2\tau} + \int_{\Omega} \rho \log \rho + \int_{\Omega} V d\rho
\end{equation}
where $W_{2}$ is the Wasserstein distance on $\mathcal{P}(\Omega)$.

We know that this functional admits a unique minimizer, denoted $\rho$ in the rest of the paper. Indeed, $\mathcal F$ is the sum of three convex functionals, and the entropy part is strictly convex, which provides uniqueness. We denote $T(x) = x -\nabla \varphi(x)$ the optimal transport map between $\rho$ and $g$ (which is the gradient of the convex function $x\mapsto \frac{|x|^2}{2}-\varphi(x)$, see \cite{Brenier polar}), where $\varphi$ is the corresponding Kantorovitch potential. We recall that the minimizer $\rho$ satisfies the following properties (we refer to Chapter 8 in \cite{OTAM}):
\begin{gather}
    \rho \mbox{ is Lipschitz and strictly positive on }\Omega \notag\\
	 \log \rho + V + \frac{\varphi}{\tau} = \mbox{constant on }\Omega \label{optimality}\\
	\mathrm{det}(I - D^{2}\varphi(x)) = \frac{\rho(x)}{g(T(x))} \label{monge-ampere}
\end{gather} 
Equation \eqref{monge-ampere} is the Monge-Amp\`ree equation for the transport between $\rho$ and $g$ and \eqref{optimality} is the optimality condition for $\rho$.
We will prove the following:

\begin{theorem}
Suppose that $\Omega\subset\R^d$ is a bounded convex set, that $g$ is bounded from below by a strictly positive constant and Lipschitz continuous on $\Omega$, and that $V$ is Lipschitz continuous and uniformly $\alpha$-convex on the same domain for some $\alpha\in\R$. Then we have 
\begin{align}
	Lip_{\Omega}(\log \rho + V) (1+ \alpha \tau) &\leqslant Lip_{\Omega}(\log g + V)
\end{align}
where $Lip_{\Omega}(f) = \underset{x \neq y \in \Omega}{sup} \frac{|f(x) - f(y)|}{|x-y|}$
\end{theorem}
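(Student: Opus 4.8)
The plan is to work with $F:=\log\rho+V$ and $G:=\log g+V$, and to show that $m:=\mathrm{Lip}_\Omega(F)=\sup_\Omega|\nabla F|$ (the two coincide since $\Omega$ is convex) satisfies $m(1+\alpha\tau)\le L:=\mathrm{Lip}_\Omega(G)$. By the optimality condition \eqref{optimality} one has $\nabla F=-\tau^{-1}\nabla\varphi$ and $D^2F=-\tau^{-1}D^2\varphi$, so controlling $|\nabla F|^2$ is the same as controlling $|\nabla\varphi|^2$. Before any computation I would reduce to a regular situation: using the announced extension–approximation procedure I would replace $\Omega$ by a large ball and mollify $g$ and $V$, so that the minimizer $\rho$, and hence $\varphi$ and $F$, are smooth (say $C^3$) and $\rho$ is bounded away from $0$. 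Both sides of the claimed inequality being stable under the relevant convergences, the general statement follows by passing to the limit. This reduction, together with the lemma asserting that $\max|\nabla\varphi|$ is not attained on the boundary of the ball, is what lets the whole argument take place at an interior maximum.

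So let $x_0$ be an interior point where $|\nabla F|^2$ is maximal, set $e:=\nabla F(x_0)/|\nabla F(x_0)|$ and $y_0:=T(x_0)=x_0-\nabla\varphi(x_0)=x_0+\tau m\,e$. The first-order condition $\nabla(|\nabla F|^2)=2D^2F\,\nabla F=0$ at $x_0$ gives $D^2F(x_0)e=0$, hence $D^2\varphi(x_0)e=0$ and $DT(x_0)e=(I-D^2\varphi(x_0))e=e$. The identity to differentiate comes from taking logarithms in \eqref{monge-ampere}, namely $\log\rho(x)=\log g(T(x))+\log\det(I-D^2\varphi(x))$, which rearranges into
\begin{equation}
F(x)=G(T(x))+\big(V(x)-V(T(x))\big)+\log\det\big(I-D^2\varphi(x)\big).\label{keyid}
\end{equation}
Differentiating \eqref{keyid} at $x_0$ in the direction $e$, using $DT(x_0)e=e$ and $\partial_e\varphi=-\tau\,\partial_e F$ to rewrite the $\log\det$ term, yields
\[
m=\nabla G(y_0)\cdot e+\big(\nabla V(x_0)-\nabla V(y_0)\big)\cdot e+\tau\,\mathrm{tr}\big[(I-D^2\varphi(x_0))^{-1}D^2(\partial_e F)(x_0)\big].
\]

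The heart of the argument is that the last, third-derivative term is nonpositive. Indeed $I-D^2\varphi=DT$ is the differential of a gradient of a convex function with $\det(I-D^2\varphi)=\rho/g(T)>0$, so $(I-D^2\varphi(x_0))^{-1}$ is positive definite; on the other hand the second-order condition $\xi^\top D^2(|\nabla F|^2)\,\xi\le 0$ for all $\xi$ expands, using $D^2F(x_0)e=0$, to $|D^2F\,\xi|^2+m\,\xi^\top D^2(\partial_e F)\,\xi\le 0$, whence $D^2(\partial_e F)(x_0)\preceq 0$. Since the trace of the product of a positive-definite and a negative-semidefinite matrix is $\le 0$, the term can be dropped, leaving $m\le \nabla G(y_0)\cdot e+(\nabla V(x_0)-\nabla V(y_0))\cdot e\le L+(\nabla V(x_0)-\nabla V(y_0))\cdot e$, as $|\nabla G|\le L$ everywhere.

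It then remains to use $\alpha$-convexity. Since $y_0-x_0=\tau m\,e$, the monotonicity inequality $(\nabla V(x_0)-\nabla V(y_0))\cdot(x_0-y_0)\ge\alpha|x_0-y_0|^2$ (equivalent to $D^2V\ge\alpha I$) gives $(\nabla V(x_0)-\nabla V(y_0))\cdot e\le -\alpha\tau m$. Combining, $m\le L-\alpha\tau m$, i.e. $m(1+\alpha\tau)\le L$, which is exactly the claim. In this scheme the only genuinely delicate points are the two reductions flagged at the outset: the interior-maximum lemma on the ball and the extension/regularization of $\Omega$, $g$ and $V$. Everything else is a direct application of the first- and second-order optimality conditions at $x_0$ to the identity \eqref{keyid}. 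I expect the interior-maximum lemma to be the main obstacle, since it is precisely where the geometry of the domain (convexity, and the structure of $\varphi$ near the boundary) must be exploited.
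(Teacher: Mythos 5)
Your interior computation is correct and is, up to notation, the paper's own: differentiating the identity $F=G\circ T+(V-V\circ T)+\log\det(I-D^2\varphi)$ in the direction $e$ at the maximum point, with $DT(x_0)e=e$ from the first-order condition and $D^2(\partial_e F)(x_0)\preceq 0$ from the second-order condition, is exactly what the paper does in index form (it differentiates the logarithm of the Monge--Amp\`ere equation, multiplies by $\varphi_i$, sums, and uses $D^2\varphi\,\nabla\varphi=0$ together with $\mathrm{Tr}(AB)\le 0$ for $A\succeq 0$, $B\preceq 0$); the $\alpha$-convexity step is identical. The genuine gap is everything you set aside. The two reductions you invoke --- the interior-maximum lemma on the ball and the extension--approximation --- are not background facts that can be quoted: they are precisely the content of the theorem beyond Lee's periodic-case computation, and your proposal proves neither. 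For the ball lemma the paper needs a real argument: Caffarelli regularity to know $T$ is a homeomorphism sending $\partial B_R$ to $\partial B_R$, so that $f=\frac12|\nabla\varphi|^2$ coincides with $x\cdot\nabla\varphi$ on the boundary; two Lagrange-multiplier identities $D^2\varphi\,\nabla\varphi=\lambda\mathbf{n}$ and $D^2\varphi\,x_0+\nabla\varphi=\mu\mathbf{n}$ at a boundary maximum; the strict inequality $D^2\varphi<I$ coming from the Monge--Amp\`ere equation; and finally the geometric fact that $T(x)$ stays in the same hemisphere as $x$, giving $|\nabla\varphi|\le\sqrt{2}\,R$ on $\partial B_R$ and a contradiction with the deduced bound $f(x_0)>R^2$.

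The approximation step, as you sketch it, would moreover fail in at least three places. First, one cannot simply ``replace $\Omega$ by a large ball'': the minimizer of the JKO functional over $\mathcal{P}(B_R)$ with extended data has full support in $B_R$ and is unrelated to $\rho$; the paper forces the limit problem back onto $\Omega$ by adding the penalization $n\,d(\cdot,\Omega)^2$ to $V_n$ and proving $\Gamma$-convergence of $\mathcal{F}_n$ to the functional which is $+\infty$ unless $\mathrm{spt}(\rho)\subset\Omega$. Second, ``mollify $g$ and $V$'' does not produce the right-hand side you need: there is no reason that $\Lip(\log(g*\xi_n)+V_n)\le\Lip(\log g+V)$, since the logarithm does not commute with convolution. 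The paper instead extends $h=\log g+V$ by a Lipschitz-preserving sup-formula (and $V$ by an $\alpha$-convexity-preserving one), mollifies $h$, and then \emph{defines} $g_n:=\lambda_n e^{h_n-V_n}$, so that $\log g_n+V_n=h_n+\log\lambda_n$ has the correct Lipschitz bound by construction. Third, ``both sides being stable under the relevant convergences'' is not automatic: $\Gamma$-convergence only yields weak-$*$ convergence of the minimizers, under which $\Lip_\Omega(\log\rho+V)$ is not lower semicontinuous; the paper must first extract a uniform Lipschitz bound on the approximating densities (from the ball inequality itself combined with $\Lip_\Omega(V_n)\le\Lip_\Omega(V)+2$, the extra $2$ coming from the gradient of the penalization) in order to upgrade weak to uniform convergence on $\Omega$, and only then pass to the limit. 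Relatedly, the $C^3$ smoothness your pointwise argument requires is not produced by mollifying the data alone: it comes from Caffarelli's theory plus a bootstrap through the optimality condition ($\rho$ Lipschitz and bounded below $\Rightarrow\varphi\in C^{2,\beta}\Rightarrow\log\rho\in C^{1,\beta}\Rightarrow\varphi\in C^{3,\beta}$). In short, your skeleton is right and matches the paper, but what you call the delicate points are the proof.
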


\section{The proof}

\begin{proof}

We follow the same idea as in \cite{LeeJKO} (Lemma 3.2), except that we need to get rid of a possible maximum on the boundary We eliminate this case when the domain is a ball and we do the general case by approximation.

\subsection{The computation}

Suppose that $V$ and $\log g$ are $C^{1,\beta}$ functions for some $\beta\in (0,1)$ and that $\Omega$ is smooth and uniformly convex. In this case, the regularity theory for the Monge-Ampere equation by Caffarelli (\cite{caf3, DePFig survey, Fig book}) provides $\varphi\in C^{3,\beta}$. Indeed, we first observe that $\rho$ is Lipschitz and bounded from below, so we can apply Caffarelli's theory to obtain $\varphi\in C^{2,\beta}$ (using $g,\rho\in C^{0,\beta}$ and the assumptions on the boundary of $\Omega$). This implies $\log\rho\in C^{1,\beta}$ and allows to obtains $\varphi\in C^{3,\beta}$, which is the regularity we need for our next computations.  

We proceed by a maximum principle argument : let $x_{0}$ be a point where $|\nabla\varphi|^{2}$ achieves its maximum. If $x_{0}$ is in the interior of $\Omega$, the optimality conditions are :
\begin{align} 
    \sum_{i=1}^{d} \varphi_{ij}\varphi_{i}(x_{0}) &= 0 \mbox{  for all } j \label{optimality-max}\\
	(\varphi_{ijk} \varphi_{i} + \varphi_{ij} \varphi_{ik})_{(j,k)}(x_{0}) &\leqslant 0 \mbox{  for all }  i \label{optimality-max-2nd}
\end{align}
where the indices correspond to partial derivatives in the associated direction and the inequality in the second line means that the matrix (indexed by $j$ and $k$) is negative semidefinite for every $i$.

We differentiate the logarithm of $\eqref{monge-ampere}$ in the $i$ direction to obtain :
\begin{equation}
    -\mathrm{Tr}(B^{jk}\varphi_{ijk}(x_{0})) = (\log(\rho))_{i}(x_{0}) - (\log g)_{i}(T(x_{0})) + \sum_{k=1}(\log g)_{k} (T(x_{0})) \varphi_{ik}(x_{0}) \label{monge_amp_der}
\end{equation}
where the matrix $B^{jk}$ is given by $B= (I-D^{2}\varphi(x_{0}))^{-1}$.

By using that $B$ is a symmetric and non-negative matrix (as it is the inverse of the Hessian of a convex function) and the optimality conditions \eqref{optimality-max-2nd}, we get~:
\begin{equation}\label{Bijkfii}
	-\mathrm{Tr}(B^{jk}\varphi_{ijk}\varphi_{i}) \geqslant \mathrm{Tr}(B^{jk}\varphi_{ij}\varphi_{ik}) \geqslant 0
\end{equation}
where we use $A \geqslant 0, B \leqslant 0\Rightarrow \mathrm{Tr}(AB) \leqslant 0$, with $A=(D^2\varphi)^2\geq 0$.
Then, by multiplying \eqref{monge_amp_der} by $\varphi_{i}$ and using \eqref{optimality-max},\eqref{Bijkfii},
\begin{equation}\label{nabna}
	\nabla\varphi(x_0) \cdot \nabla (\log \rho) (x_{0})  \geqslant \nabla (\log g) (T(x_{0})) \cdot \nabla \varphi (x_{0})
\end{equation}

 Equation \eqref{optimality} gives  $\nabla (\log \rho) =- \nabla V - \frac{\nabla \varphi}{\tau}$ and thus we get :
\begin{align}
	\frac{|\nabla \varphi(x_0)|^{2}}{\tau} \leqslant \nabla \varphi(x_0) \cdot \big(-\nabla V(x_{0}) &-(\nabla (\log g))(T(x_{0}))\big) \label{introduction_u} \notag\\
	\frac{|\nabla \varphi(x_0)|^{2}}{\tau} + \nabla \varphi(x_0) \cdot \big( \nabla V(x_{0}) - \nabla V(T(x_{0}))\big) &\leqslant -\nabla \varphi(x_0) \cdot (\nabla (V+\log g)(T(x_{0})) \notag\\
	\frac{|\nabla \varphi(x_0)|^{2}}{\tau} + (x_{0} - T(x_{0})) \cdot (\nabla V (x_{0}) - \nabla V(T(x_{0})) &\leqslant |\nabla \varphi(x_0)|~ \Vert \nabla (V \log g) \Vert_{\infty} \notag\\
	\frac{|\nabla \varphi(x_0)|^{2}}{\tau}(1 + \tau \alpha ) &\leqslant |\nabla \varphi(x_0)|~ \Vert \nabla (V +\log g) \Vert_{\infty}\\
\end{align}
The last expression can be simplified in order to obtain
$$\Vert\nabla (V + \log \rho) \Vert_{\infty} (1+ \alpha \tau)= \frac{|\nabla \varphi(x_0)|}{\tau}(1 + \tau \alpha ) \leqslant \Vert \nabla (V + \log g) \Vert_{\infty}$$
since $x_{0}$ is the point where $|\nabla \varphi|^{2}$ and also $|\nabla (\log \rho + V)|$ achieve their maximal values. We used the fact that, if V is $\mathcal{C}^{1}$ and $\alpha$ uniformly convex, then for all $x, y$ we have $(\nabla V(x) - \nabla V(y) )\cdot (x-y) \geqslant \alpha |x-y|^{2}$.

\subsection{Treating the boundary}

The computations above required the maximum point $x_{0}$ to be an interior point of $\Omega$. We will remove this difficulty in two steps: first we show that if $\Omega$ is a ball the maximum can't be achieved on the boundary. Then we reduce the general case to this one by approximation.

\subsubsection{If the domain is a ball}

In this subsection, we suppose $\Omega = B_{R}(0)$ and prove the following
\begin{lemma}
Suppose that $\Omega = B_{R}(0)$ is a ball and that $\varphi$ is the Kantorovich potential for the quadratic cost between two densities $\rho,g\in C^{0,\beta}(\Omega)$ which are both 
bounded from below. Then $\phi\in C^1$ and the maximum of $ |\nabla \varphi|$ cannot be achieved on $\partial\Omega$. \end{lemma}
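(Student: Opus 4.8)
The plan is to combine Caffarelli's regularity theory with the fact that the optimal map sends the sphere to itself, and then to derive a contradiction from two first-order conditions at a putative boundary maximum, closing the argument with the Monge--Ampère equation.

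First I would fix the regularity and the geometry. Since $\Omega=B_R(0)$ is smooth and uniformly convex and $\rho,g\in C^{0,\beta}$ are bounded away from $0$ and $\infty$ (the lower bound is assumed, the upper bound follows from continuity on the compact $\overline{B_R}$), Caffarelli's global regularity for the second boundary value problem gives not merely the asserted $\varphi\in C^1$ but in fact $\varphi\in C^{2,\beta}(\overline{B_R})$, with $\det(I-D^2\varphi)=\rho/g(T)$ bounded away from $0$ and with $x\mapsto |x|^2/2-\varphi(x)$ strictly convex. Hence $T=\mathrm{Id}-\nabla\varphi$ is a homeomorphism of $\overline{B_R}$ onto itself, and by invariance of domain it maps $\partial B_R$ onto $\partial B_R$. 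In particular $|T|^2\equiv R^2$ on the sphere, which rewrites as the boundary identity $|\nabla\varphi|^2=2\,x\cdot\nabla\varphi$ on $\partial B_R$; equivalently, $\Phi:=|T|^2-R^2$ is $\le 0$ on $\overline{B_R}$ and vanishes on $\partial B_R$.

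Next I would argue by contradiction, assuming $G:=|\nabla\varphi|^2$ attains its maximum at $x_0\in\partial B_R$, normalizing $x_0=Re_d$, $\nu=e_d$, $v=\nabla\varphi(x_0)$ (with $v\neq 0$, else $\varphi$ is constant). Two conditions are available. Because $x_0$ maximizes $G$ over the sphere, the tangential part of $\nabla G(x_0)=2D^2\varphi\,v$ vanishes, so $D^2\varphi\,v=\mu\nu$; and because $x_0$ maximizes $G$ over $\overline{B_R}$, the outward normal derivative gives $\partial_\nu G=2\mu\ge 0$. Because $\Phi$ attains its maximal value $0$ along all of $\partial B_R$, its tangential gradient vanishes there, and differentiating $\Phi$ yields $\varphi_{ad}(x_0)=-v_a/R$ for tangential indices $a<d$. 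Feeding this into the tangential part of $D^2\varphi\,v=\mu\nu$ shows that the tangential displacement $v_{\mathrm{tan}}$ is an eigenvector of the tangential Hessian $[\varphi_{ab}]_{a,b<d}$ with eigenvalue $(v\cdot\nu)/R$.

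The contradiction comes from playing convexity against maximality. Evaluating $D^2\varphi\le I$ on the unit tangential eigenvector gives $(v\cdot\nu)/R\le 1$, while a short computation of $\mu$ from the relations above gives $\mu=(v\cdot\nu)\big(\varphi_{\nu\nu}-2+(v\cdot\nu)/R\big)$, so $\mu\ge0$ together with $\varphi_{\nu\nu}\le 1$ forces $(v\cdot\nu)/R\ge 1$. Hence $v\cdot\nu=R$ and the tangential eigenvalue equals $1$, i.e. $I-D^2\varphi(x_0)$ is singular; but then $\det(I-D^2\varphi)(x_0)=0$, contradicting $\det(I-D^2\varphi)=\rho/g(T)>0$, where precisely the lower bound on $\rho$ and the upper bound on $g$ are used. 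The degenerate antipodal case $v_{\mathrm{tan}}=0$, $|v|=2R$, is handled identically by applying the same maximality/convexity dichotomy to $\Phi$ directly, again forcing $I-D^2\varphi(x_0)$ to be singular. This rules out a boundary maximum.

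The step I expect to be the main obstacle is this last one: seeing that the two first-order conditions, one coming from the maximum of $|\nabla\varphi|$ and one from $T$ mapping the sphere to itself, pinch $v\cdot\nu$ to the exact value $R$ from both sides, after which strict positivity of the Monge--Ampère Jacobian closes everything. A secondary but essential point is securing $C^2$ regularity up to the boundary, for which uniform convexity of the ball is crucial; this is exactly why the statement is proved for a ball first, the general convex domain being reached afterwards by the extension--approximation procedure.
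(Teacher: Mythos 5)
Your proof is correct, and it shares the paper's skeleton: Caffarelli's theory gives $\varphi\in C^{2,\beta}(\overline{B_R})$ and $T$ a homeomorphism sending $\partial B_R$ to itself, whence the boundary identity $\tfrac12|\nabla\varphi|^2=x\cdot\nabla\varphi$ on the sphere, and the contradiction at a putative boundary maximum $x_0$ is extracted from first-order (Lagrange-multiplier) conditions combined with the Monge--Amp\`ere equation. Where you genuinely diverge is in how the contradiction is closed. The paper contracts its two multiplier identities with $\mathbf{n}(x_0)$ and with $\nabla\varphi(x_0)$, uses the strict bound $D^2\varphi<I$ to obtain $\tfrac12|\nabla\varphi(x_0)|^2>R^2$, and then invokes an external geometric fact --- the hemisphere property $\mathbf{n}(T(x))\cdot\mathbf{n}(x)\ge 0$ from Lemma 2.4 of \cite{iacobelli2019weighted}, which forces $\tfrac12|\nabla\varphi|^2\le R^2$ on $\partial B_R$ --- to conclude. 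You never use that lemma: from the same first-order information you identify $v_{\mathrm{tan}}$ as an eigenvector of the tangential Hessian with eigenvalue $(v\cdot\nu)/R$, pinch that eigenvalue to exactly $1$ between the convexity bound $D^2\varphi\le I$ and the maximality bound $\mu\ge 0$ (your $\mu$ is the paper's $\lambda$), using $|v|^2=2R\,v\cdot\nu$ and $\varphi_{\nu\nu}\le 1$, and then let the nondegeneracy $\det(I-D^2\varphi)=\rho/g(T)>0$ exclude the resulting singularity of $I-D^2\varphi(x_0)$. I verified the two computations on which this hinges, $\varphi_{ad}=-v_a/R$ and $\mu=(v\cdot\nu)\bigl(\varphi_{\nu\nu}-2+(v\cdot\nu)/R\bigr)$, as well as your treatment of the antipodal case $v_{\mathrm{tan}}=0$ via the sign of $\partial_\nu\Phi$; all are sound. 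So your route is self-contained where the paper's leans on a citation, at the price of coordinate computations and one extra degenerate case. One shared caveat: both arguments tacitly assume $\nabla\varphi(x_0)\neq 0$ (you flag this; the paper needs $f(x_0)>0$ to deduce $\mu\ge 2R$ from $f(x_0)(2-\mu/R)\le 0$). If $\varphi$ is constant, i.e. $\rho=g$, the maximum value $0$ is attained everywhere, boundary included, so the lemma must be read as excluding this trivial situation --- harmless for the theorem, since the interior computation then applies at any interior point.
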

\begin{proof}
Let us define $f(x) = \frac{1}{2} |\nabla \varphi(x)|^{2}$. From the regularity on the densities and on the domain, Caffarelli's theory implies $\varphi\in C^{2,\beta}$ and $T$ is a homeorphism of $\Omega$. Therefore it sends the boundary to the boundary (an observation already used for similar purposes in Lemma 2.4 in \cite{iacobelli2019weighted}). Now, for $x \in  \partial \Omega$ we have
\begin{equation*}
	|T(x)|^{2} = |x-\nabla \varphi(x)|^{2} = R^{2}\quad \Rightarrow\quad
	f(x) = x\cdot \nabla \varphi(x)
\end{equation*}

Suppose that $f$ achieve its maximum in $\Omega$ at $x_{0} \in \partial \Omega$. Since $f(x) = x\cdot \nabla \varphi$ in $\partial \Omega$, $x\cdot \nabla \varphi$ also achieve its maximum in $\partial \Omega$ at $x_{0}$. If we denote $\mathbf{n}(x) = \frac{x}{R}$ the normal at $x\in \partial \Omega$, there exists $\lambda \in \mathbb{R}^{+}$ (global maximum) and $\mu \in \mathbb{R}$ (maximum on the boundary) so that :
\begin{align}
	D^{2}\varphi(x_{0}) \cdot \nabla \varphi(x_{0}) &= \lambda \mathbf{n}(x_{0}) \label{boule_1}\\
	D^{2}\varphi(x_{0}) \cdot x_{0} + \nabla \varphi(x_{0}) &= \mu \mathbf{n}(x_{0})	  \label{boule_2}\
\end{align}  

By multiplying equation \eqref{boule_2} by $\mathbf{n}(x_{0})=\frac{x_0}{R}$, we get :
\begin{equation}
	R \mathbf{n}(x_{0})\cdot D^{2}\varphi(x_{0}) \cdot  \mathbf{n}(x_{0}) + \frac{f(x_{0})}{R} = \mu\quad\Rightarrow\quad
	\frac{f(x_{0})}{R}  > \mu - R \label{boul_ineg_1}
\end{equation}
where we used the inequality $D^{2}\varphi < I$, which is strict since the Monge-Ampere equation \eqref{monge-ampere} tells us that $I - D^{2}\varphi$ can't have a zero eigen-value, since $\rho >0$. 
If we multiply the same equation \eqref{boule_2} by $\nabla \varphi(x_{0})$ and use  \eqref{boule_1}, we get :
 $$\nabla \varphi(x_{0}) \cdot 	D^{2}\varphi(x_{0}) \cdot x_{0}+ |\nabla \varphi(x_{0})|^{2} = \mu \mathbf{n}(x_{0}) \cdot \nabla \varphi (x_{0})\quad\rightarrow\quad
	\lambda x_{0} \cdot \mathbf{n}(x_{0}) + 2f(x_{0}) = \mu \frac{x_{0}}{R} \cdot\nabla \varphi(x_{0}),
	$$
	which can be re-written as 
\begin{equation}
	f(x_{0})(2 - \frac{\mu}{R}) = - R \lambda \leqslant 0\quad \Rightarrow\quad	\mu \geqslant 2R, \label{boule_ineg_2}
\end{equation}

since $\lambda \geqslant 0$. Putting \eqref{boul_ineg_1} and \eqref{boule_ineg_2} together, we obtain $	f(x_{0}) > R^{2}$.

However we know that if $\Omega$ is a strictly convex domain, the transport plan shall verify for all $x\in \partial \Omega,$ $\mathbf{n}(T(x))\cdot~\mathbf{n}(x)\geqslant0$ (see the proof of Lemma 2.4 in \cite{iacobelli2019weighted}). In the case of a ball this implies that $T(x)$ is in the same hemisphere as $x$, thus in term of the distance between the two points we have $|T(x) - x| = |\nabla \varphi(x)| \leqslant \sqrt{2} R$. Therefore we have $f(x_{0}) \leqslant R^{2}$ on $\partial \Omega$, which leads to a contradiction if the maximum is attained on the boundary.\end{proof}

\subsubsection{Approximation}
We come back to the general case and we fix $R>0$ such that $\Omega \subset B_{R}$. The idea is to approximate our problem by a sequence of regularized problems defined on $B_{R}$ with a sequence of potentials $V_n$ diverging to infinity out of $\Omega$. First we extend $V$ to $\mathbb{R}^{d}$ by defining a function $\tilde{V}$ :
\begin{equation*}
    \tilde{V}(x) = \underset{y \in \Omega}{\sup} \left( V(y) + \nabla V(y) \cdot (x-y) + \frac{\alpha}{2} |x-y|^{2} \right) 
\end{equation*}
This extension is non-negative and uniformly $\alpha$-convex in $\mathbb{R}^{d}$. It coincides with $V$ on $\Omega$ since $V$ itself is uniformly $\alpha$-convex.
We consider now the function $h:= \log g +V$ and extend it to $\mathbb{R}^{d}$ by defining $\tilde h$ :
\begin{align*}
    \tilde h (x) = \underset{y \in \Omega}{\sup}\left(h(y) + \Lip(h) |x-y| \right)
\end{align*}
where $\Lip(h)$ is the Lipschitz constant of $h$. We have $\Lip(h) = \Lip(\tilde h)$.
We then take convolutions in order to enforce higher regularity. Let $\xi\in C^{\infty}(\mathbb{R}^{d},\mathbb{R^{+}})$ be a convolution kernel, supported in $B_{1}$, and satisfying $\int_{\mathbb{R}^{d}} \xi dx= 1$. We then rescale $\xi$ defining $ \xi_{n}(x) = n^{d} ~ \xi(n x)$ and set
\begin{align}
	V_{n}(x) &= \int_{R^{d}} (\tilde V(y) + n d(y,\Omega)^{2}) \xi_{n}(x-y) dy \notag\\
	h_{n}(x) &= \int_{R^{d}} \tilde h(y)  \xi_{n}(x-y) dy \notag\\
	g_{n}(x) &= \lambda_{n} e^{h_{n}(x) - V_{n}(x)}\notag
\end{align}
where $\lambda_{n}$ is a normalisation constant chosen so as to ensure $\int_{B_{R}} g_{n} = 1$. We note that for every~$n$ we have $\Vert \nabla h_{n} \Vert_{\infty} = \Lip(h_{n}) \leqslant \Lip(\tilde h) = \Lip(h) $ and that $V_{n}$ is $\alpha$-uniformly convex and non-negative in $\mathbb{R}^{d}$.
We also define a sequence of functional $\mathcal{F}_{n}$ on $\mathcal{P}(B_{R})$ :
\begin{equation}
	\mathcal{F}_{n}(\rho) := \mathcal{F}^\tau_{g_n,V_n}(\rho) = \frac{W^{2}_{2}(\rho,g_{n})}{2\tau} + \int_{B_{R}} \rho \log \rho + \int_{B_{R}} V_{n} d\rho
\end{equation}

By applying our previous computations to the minimizer $\hat\rho_n$ of the functional $\mathcal{F}_{n}$ (note that the regularity is enforced by the convolution and that we are on the ball, so that the maximum of the squared gradient of the Kantorovich potential cannot be achieved on the boundary) we obtain
\begin{align}
    \Vert\nabla (V_{n} + \log \tilde \rho_{n}) \Vert_{L^\infty(B_R)} (1+ \alpha \tau) &\leqslant \Vert \nabla (V_{n} + \log g_{n}) \Vert_{L^\infty(B_R)} \leqslant \Lip_{\Omega} (V + \log g).\label{resultat_suite}
\end{align}

We now study the convergence of the $V_{n}$, $f_{n}$, $g_{n}$ and $\mathcal{F}_{n}$. By the properties of the convolution and since $\tilde{h}$ is extension of $h$, $h_n$ converges uniformly to $h$ in $\Omega$. As for $V_n$, the situation is trickier, because we added a penalization of the form $n d(\cdot,\Omega)^{2}$ before convolving. Since the support of $\xi_n$ is contained in the ball of radius $1/n$, when $x\in\Omega$ this penalization contributes at most $n.(1/n)^2$ to the integral, so we have $\tilde V*\xi_n\leq V_n\leq  \tilde V*\xi_n+1/n$ on $\Omega$. Hence $V_n$ also converges uniformly to $ V$ on $\Omega$. On the other hand, for all $x \in B_{R} \setminus \Omega$, we have $\underset{n \to \infty}{\lim} V_{n}(x) = + \infty$. We deduce that $g_{n}$ converge uniformly to $g$ in $\Omega$ and pointwise to $0$ in $B_{R} \setminus \Omega$. Since $g_n$ is a probability measure, we also have the weak convergence as probability measures of $g_n$ to the measure supported on $\Omega$ with density $g$.

Then, a quick computation provides $\Lip(V_{n})_{\Omega} \leqslant \Lip(V)_{\Omega} + 2$ (the additional term $+2$ comes from the gradient of the penalization $\nabla \big(nd(\cdot, \Omega)^{2}\big)=2nd(\cdot, \Omega)\nabla d(\cdot, \Omega)$, whose modulus is almost $2$ at points of $\Omega+\spt(\xi_n)$), thus by \eqref{resultat_suite} we get:
\begin{align}
    \Lip(\log \rho_{n})_{\Omega} \leqslant \frac{\Lip(V + \log g)_{\Omega} }{1 + \alpha \tau} + \Lip(V)_{\Omega} + 2. \label{borne_lip_log_rho} 
\end{align}
This bound, together with the mass constraint $\int_\Omega \rho_n\leq \int_{B_R}\rho_n=1$ implies a uniform upper bound on $\rho_n$. Composing with the exponential function, which is Lipschitz continuous on bounded intervals, we deduce that $\rho_{n}$ is uniformly Lipschitz in $\Omega$. 
We will now characterize the limit of $\rho_{n}$. 

The sequence of functionals $\mathcal{F}_{n}$ $\Gamma$-converges for the weak topology on measures to the functional $\mathcal{F}$ defined as
$$\mathcal F(\rho):=\begin{cases} \mathcal F^\tau_{g,V}(\rho) &\mbox{ if }\spt(\rho)\subset\Omega,\\
							+\infty &\mbox{ if not.}\end{cases}$$
Indeed, if the sequence $\rho_{n} \in \mathcal{P}(B_{R})$ weakly converges to $\rho \in \mathcal{P}(\Omega)$, we have
\begin{align}
    \mathcal{F}(\rho) \leqslant \underset{n \to \infty}{\liminf}~ \mathcal{F}_{n}(\rho_{n}).
\end{align}
This can be proven in the following way:  $\rho \mapsto \int \rho \log \rho $ is lower-semicontinous for the weak convergence of measure and  $W_{2}$ metrizes the same convergence; if $\int V_n d\rho_n\leq C$ then $\rho_n(\{x\,:\, d(x,\Omega)>\ve\})\to 0$ as $n\to \infty$, hence $\spt(\rho)\subset\Omega$; we also have $\int V_n d\rho_n\geq \int (\tilde V*\xi_n)d\rho_n\to \int \tilde Vd\rho=\int Vd\rho$ where the last equality is valid if $\rho$ is concentrated on $\Omega$.

On the other hand, if $\rho \in \mathcal{P}(\Omega)$, then the constant sequence $\rho_{n} = \rho$ satisfies
\begin{align}
    \underset{n \to \infty}{\limsup}~ \mathcal{F}_{n}(\rho_{n}) \leqslant \mathcal{F}(\rho)
\end{align}

We conclude that the optimizers $\tilde\rho_{n}$ weakly converge to $\rho$, the only minimizer of $\mathcal{F}$. Since $\tilde\rho_{n}$ is uniformly Lipschitz in $\Omega$, we deduce that the convergence is uniform in $\Omega$. Finally taking the limit of \eqref{resultat_suite} we obtain
\begin{align}
    \Lip_{\Omega}(\log \rho + V) (1+ \alpha \tau) &\leqslant \Lip_{\Omega}(\log g + V).\qedhere
\end{align}
\end{proof}
%
%
%
%
%

{\bf Acknowledgments} The second author acknowledges the support of the Monge-Amp\`ere et G\'eom\'etrie Algorithmique project, funded by Agence nationale de la
recherche (ANR-16-CE40-0014 - MAGA).

\end{document}